\definecolor{dkgreen}{rgb}{0,0.6,0}
\tiny\color{gray},
\newtheorem{thm}{Theorem}[section] 
\newtheorem{claim}[thm]{Claim}
\newtheorem{cor}[thm]{Corollary}
\newtheorem{defn}[thm]{Definition}
\newtheorem{prop}[thm]{Proposition}
\theoremstyle{definition}
\newtheorem{rem}[thm]{Remark}
\newtheorem{exmpl}[thm]{Example}
\title{Alternating Roots of Polynomials over Cayley-Dickson Algebras
    }
\author{Adam Chapman and Ilan Levin
    }
\abstract{%
   We introduce the notions of alternating roots of polynomials and alternating polynomials over a~Cayley-Dickson algebra, and prove a~connection between the alternating roots of a~given polynomial and the roots of the corresponding alternating polynomial over the Cayley-Dickson doubling of the algebra. We also include a~detailed Octave code for the computation of alternating roots over Hamilton's quaternions.
    }
\keywords{%
   Cayley-Dickson Algebras; Roots of Polynomials; Composition Algebras
    }
\begin{document}

\section{Introduction}

The study of roots of polynomials over fields and more general algebraic objects has been an active area of research for many years.
A detailed description of how to find the roots of a~standard polynomial over $\mathbb{H}$ appeared in~\cite{JanovskaOpfer:2010}, and a~generalization for arbitrary quaternion division algebras appeared in~\cite{ChapmanMachen:2017}.
In~\cite{Chapman:2020}, a~description of the roots of a~standard octonion polynomial was provided, and spherical roots of polynomials over arbitrary Cayley-Dickson algebras were studied in~\cite{ChapmanGutermanVishkautsanZhilina:2023}.

It is difficult, in general, to study the roots of standard polynomials over arbitrary Cayley-Dickson algebras, because the structure of these algebras becomes increasingly more complicated and all the nice properties, like associativity and then alternativity, get lost.
In this paper, we present a~method for finding the roots of a~special kind of polynomials over arbitrary Cayley-Dickson algebras, what we call ``alternating polynomials", and generalize the result for arbitrary polynomials. We also show that in the anisotropic real case, if the degree is odd, then such polynomials always admit roots in the subspace orthogonal to the doubled subalgebra.

\section{Cayley-Dickson Algebras}

Given a~unital $m$-dimensional $F$-algebra $A$ with involution $a \mapsto \overline{a}$, we can construct an algebra $B$ called the Cayley-Dickson doubling of $A$ with parameter $\gamma \in F^{\times}$, in the following way: $B=A \times A$, with multiplication in $B$ given by \[ (a,b) \cdot (c,d) = (ac+\gamma \overline{d} b, da + b \overline{c})\] for any $a,b,c,d \in A$. The involution extends to $\overline{(a,b)} = (\overline{a},-b)$. Let us denote the algebra $B$, which is the algebra resulting of the Cayley-Dickson doubling of $A$, by $CD(A, \gamma)$. Writing $B=A\oplus Av$ by the identification $(a,b) \leftrightarrow a+bv$, we have $v^2 =\gamma$.

The Cayley-Dickson algebras $A_n$ are defined recursively by starting with a~quadratic separable extension $A_1$ of $F$ with the nontrivial automorphism as the involution, and doubling any finite number of times with $\gamma_2,\dots,\gamma_n$. When $\operatorname{char}(F)\neq 2$, \[A_1=F[x : x^2 =\gamma_1]\] is a~Cayley-Dickson doubling of $A_0=F$ with the trivial involution.

For example, taking $F = \mathbb{R}$ and $\gamma_i = -1$ for all $i$, we get the classical Cayley-Dickson construction: $A_1 = \mathbb{C}$, $A_2 = \mathbb{H}$, $A_3 = \mathbb{O}$, $A_4 = \mathbb{S}$, etc.
It should be noted that in each step we lose a~nice property of the previous algebra in the case of the classic Cayley-Dickson construction: $\mathbb{C}$ is not ordered, $\mathbb{H}$ is furthermore not commutative, $\mathbb{O}$ is furthermore not associative (although alternative), and $\mathbb{S}$ is not even alternative (and their norm is not multiplicative, which causes non-trivial zero divisors to appear). A similar loss of properties occurs in the general case as well, which makes $A_n$ extremely hard to work with as $n$ grows (it becomes very hard to work with from $n \geq 4$). Nevertheless, all Cayley-Dickson algebras are flexible, i.e., satisfy $(ab)a=a(ba)$ for any $a$ and $b$, and power-associative~\cite{Schafer:1954}.

Each Cayley-Dickson algebra is equipped with a~quadratic norm form $\operatorname{Norm}: A \to F$ and a~linear trace form $\operatorname{Tr} : A \to F$ given by $\operatorname{Norm}(\lambda) = \overline{\lambda}\lambda$ and $\operatorname{Tr}(\lambda)=\overline{\lambda}+\lambda$. The algebra is quadratic in the sense that every $\lambda \in A$ satisfies \[ \lambda^2 -\operatorname{Tr}(\lambda)\lambda+\operatorname{Norm}(\lambda)=0. \] See~\cite[Chapter 33]{BOI}.

The polynomial ring of a~Cayley-Dickson algebra of $A$ is defined by $A[x] := A \otimes_{F} F[x]$. See~\cite[Section 2.3]{ChapmanGutermanVishkautsanZhilina:2023}.

To specify the type of roots under discussion here, we recall the definition of a~(regular) root as opposed to an alternating root:
\begin{definition}
Let $A$ be a~Cayley-Dickson algebra over $F$ with involution $a \mapsto \overline{a}$ \, and let $f(x) =a_nx^{n}+ \dots +a_1x+a_0 \in A[x]$.
\begin{itemize}
\item $\lambda$ is called a~regular root if $a_n(\lambda^{n})+a_{n-1}(\lambda^{n-1})+ \dots +a_1\lambda+a_0 = 0$.
\item $\lambda$ is called an alternating root if $a_n\dots \overline{\lambda} \lambda \overline{\lambda} \lambda + \dots + a_2 \overline{\lambda} \lambda + a_1 \lambda +a_0 = 0$
\end{itemize}
\end{definition}
\section{Finding alternating roots}
This section is dedicated to the method of finding alternating roots of a~given polynomial, in the case that $A$ is a~division Cayley-Dickson algebra over any field, which guarantees that $A$ has a~multiplicative anisotropic norm, which in turn means that $A$ is of dimension at most 8 over its center.

Let $f(x) =a_nx^{n}+ \dots +a_1x+a_0$. To avoid splitting into cases, suppose $n=2k+1$, but $a_{2k+1}$ might be $0$. In that case, $a_{2k} \neq 0$.

Suppose $\lambda$ is an alternating root of $f$. Let us rewrite the equation $f(x)=0$ as follows:
\[
a_{2k+1}x^{2k+1}+ a_{2k-1}x^{2k-1} + \dots + a_1x = -(a_{2k}x^{2k} + \dots + a_2x^2 + a_0)
\]
Now, plugging in $\lambda$ as an alternating root yields:
\[
(a_{2k+1}(\lambda \overline{\lambda})^{k}+ a_{2k-1}(\lambda \overline{\lambda})^{k-1} + \dots + a_1)\lambda = -(a_{2k}(\lambda \overline{\lambda})^{k} + \dots + a_2(\lambda \overline{\lambda}) + a_0)
\]
But notice that $\operatorname{Norm}(\lambda) = \lambda \overline{\lambda}$, that is, the norm of $\lambda$, which is in the base field $F$ of the Cayley-Dicskon algebra. So, set $N = \operatorname{Norm}(\lambda)$ and we get:
\[
(a_{2k+1}N^{k}+ a_{2k-1}N^{k-1} + \dots + a_1)\lambda = -(a_{2k}N^{k} + \dots + a_2N + a_0)
\]
Now, let us take the norm of both sides. We get:
\[
LHS = \overline{\lambda}(\overline{a_{2k+1}}N^{k}+ \overline{a_{2k-1}}N^{k-1} + \dots + \overline{a_1})(a_{2k+1}N^{k}+ a_{2k-1}N^{k-1} + \dots + a_1)\lambda
\]

\begin{claim}\label{lll}
All the coefficients of \[(\overline{a_{2k+1}}N^{k}+ \overline{a_{2k-1}}N^{k-1} + \dots + \overline{a_1})(a_{2k+1}N^{k}+ a_{2k-1}N^{k-1} + \dots + a_1)\] as a~polynomial in the variable $N$ are central, i.e., in $Z(A)$.
\end{claim}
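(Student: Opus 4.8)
The plan is to reduce everything to one structural fact: the norm form of a Cayley--Dickson algebra is $F$-valued, so that its polarization $\overline{a}b+\overline{b}a$ lies in $F\subseteq Z(A)$ for all $a,b\in A$. First I would relabel the odd-indexed coefficients by setting $c_i:=a_{2i+1}$ for $0\le i\le k$, so that the two factors read $\sum_{i=0}^{k}\overline{c_i}N^{i}$ and $\sum_{j=0}^{k}c_j N^{j}$. Expanding the product, the coefficient of $N^{m}$ is
\[
S_m=\sum_{\substack{i+j=m\\ 0\le i,j\le k}}\overline{c_i}\,c_j ,
\]
so it suffices to show that each $S_m$ is central.

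I would then group the summands of $S_m$ into symmetric pairs: for every unordered pair $\{i,j\}$ with $i\neq j$ and $i+j=m$ both ordered pairs occur and together contribute $\overline{c_i}c_j+\overline{c_j}c_i$, while for even $m$ there is in addition the diagonal term $\overline{c_{m/2}}c_{m/2}=\operatorname{Norm}(c_{m/2})$. Hence $S_m$ is a sum of terms of only two shapes, $\operatorname{Norm}(c_i)$ and $\overline{c_i}c_j+\overline{c_j}c_i$.

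The key point is the polarization identity $\overline{a}b+\overline{b}a=\operatorname{Norm}(a+b)-\operatorname{Norm}(a)-\operatorname{Norm}(b)$, which follows by expanding $\operatorname{Norm}(a+b)=\overline{(a+b)}(a+b)$ using additivity of the involution together with $\overline{a}a=a\overline{a}=\operatorname{Norm}(a)$ (the latter because $\operatorname{Tr}(a)\in F$). Since $\operatorname{Norm}$ takes all its values in $F$, the right-hand side lies in $F$; thus each symmetric pair is in $F$, and each diagonal term $\operatorname{Norm}(c_i)$ is in $F$ as well. Consequently every $S_m$ is a sum of elements of $F$, and as $F\subseteq Z(A)$ each coefficient $S_m$ is central, which proves the claim.

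I expect the only real subtlety to be the bookkeeping of the pairing --- making sure the constraints $0\le i,j\le k$ are honoured so that no spurious diagonal term is introduced when $m>k$ or $m$ is odd --- together with the appeal to $\operatorname{Norm}$ being a genuine $F$-quadratic form, which is exactly what guarantees that its polarization is $F$-valued. It is worth emphasising that neither associativity nor alternativity is used anywhere, which is what makes the argument valid for an arbitrary division Cayley--Dickson algebra $A$.
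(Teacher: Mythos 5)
Your proof is correct and follows essentially the same route as the paper's: the paper likewise decomposes each coefficient into a sum of norms $\operatorname{Norm}(a_r)=\overline{a_r}a_r$ and symmetrized terms $\overline{a_r}a_s+\overline{a_s}a_r=\operatorname{Tr}(\overline{a_r}a_s)$, which is exactly your pairing, with your polarization identity being just the statement that this symmetrized term equals $\operatorname{Norm}(a_r+a_s)-\operatorname{Norm}(a_r)-\operatorname{Norm}(a_s)\in F$. You simply spell out in full the bookkeeping that the paper compresses into a one-line reference.
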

\begin{proof}
They are all sums of norms $\operatorname{Norm}(a_r)=\overline{a_r}a_r$ and traces $\operatorname{Tr}(\overline{a_r}a_s)=\overline{a_r}a_s+\overline{a_s}a_r$ like in~\cite{JanovskaOpfer:2010}.
\end{proof}

So, overall, we get that:
\[
LHS = N(\overline{a_{2k+1}}N^{k}+ \overline{a_{2k-1}}N^{k-1} + \dots + \overline{a_1})(a_{2k+1}N^{k}+ a_{2k-1}N^{k-1} + \dots + a_1)
\]
which is central.

And, for the right hand side:
\[
RHS = (\overline{a_{2k}}N^{k} + \dots + \overline{a_2}N + \overline{a_0})(a_{2k}N^{k} + \dots + a_2N + a_0).
\]

\begin{claim}
All the coefficients of \[ -(\overline{a_{2k}}N^{k} + \dots + \overline{a_2}N + \overline{a_0})(a_{2k}N^{k} + \dots + a_2N + a_0)\] as a~polynomial in the variable $N$ are central in $A$.
\end{claim}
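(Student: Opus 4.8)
The plan is to mimic the proof of Claim~\ref{lll} almost verbatim, since the expression here has exactly the same bilinear shape. Writing $b_r = a_{2r}$ for $r = 0,1,\dots,k$, I would expand the product and read off the coefficient of $N^j$, namely
\[
c_j = -\sum_{r+s=j} \overline{b_r}\, b_s ,
\]
so the goal reduces to showing that each $c_j$ lies in $Z(A)$. Since $F \subseteq Z(A)$, it suffices to check that each $c_j \in F$.

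First I would separate the diagonal contribution from the off-diagonal one. When $r=s$ (which occurs precisely when $j$ is even, with $r=s=j/2$), the summand is $\overline{b_{j/2}}\, b_{j/2} = \operatorname{Norm}(b_{j/2}) \in F$. For the remaining indices I would pair the term indexed by $(r,s)$ with the one indexed by $(s,r)$ for $r<s$, obtaining $\overline{b_r}\, b_s + \overline{b_s}\, b_r$. The key step is to recognize this pair as a trace: using that the Cayley-Dickson conjugation is an anti-automorphism, one has $\overline{\overline{b_r}\, b_s} = \overline{b_s}\,\overline{\overline{b_r}} = \overline{b_s}\, b_r$, whence
\[
\overline{b_r}\, b_s + \overline{b_s}\, b_r = \overline{b_r}\, b_s + \overline{\overline{b_r}\, b_s} = \operatorname{Tr}(\overline{b_r}\, b_s) \in F .
\]
Thus, up to the overall sign, $c_j$ is a sum of the norms $\operatorname{Norm}(b_r)$ and the traces $\operatorname{Tr}(\overline{b_r}\, b_s)$, all of which land in $F$; hence every coefficient is central, and the leading $-1$ does not affect centrality. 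This is precisely the bookkeeping already invoked for Claim~\ref{lll}, following~\cite{JanovskaOpfer:2010}.

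The only point requiring care, and the main obstacle, is the validity of the pairing step, which rests on the conjugation being an \emph{anti}-automorphism rather than merely an involution. In the division setting under consideration $A$ has dimension at most $8$ over its center and is a composition algebra, so $\overline{xy}=\overline{y}\,\overline{x}$ holds automatically; for completeness I would note that this property in fact holds for every Cayley-Dickson algebra, which follows by induction on the doubling, since the formula $\overline{(a,b)} = (\overline{a},-b)$ propagates the anti-automorphism property from the halving subalgebra (the base case $A_0=F$ with trivial involution being immediate). With this in hand the argument is identical to the one for the left-hand side.
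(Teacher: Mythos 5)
Your proposal is correct and follows essentially the same route as the paper, which simply notes (referring back to the proof of Claim~\ref{lll}) that the coefficients are sums of norms $\operatorname{Norm}(a_r)=\overline{a_r}a_r$ and traces $\operatorname{Tr}(\overline{a_r}a_s)=\overline{a_r}a_s+\overline{a_s}a_r$, hence central. You merely spell out the diagonal/off-diagonal bookkeeping and the anti-automorphism property $\overline{xy}=\overline{y}\,\overline{x}$ that the paper leaves implicit, which is a faithful elaboration rather than a different argument.
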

\begin{proof}
Similar to the proof of claim~\ref{lll}.
\end{proof}

So, overall, we get that $p(N)=RHS-LHS$ is a~polynomial in $N$ with central coefficients.
\begin{prop}
The intersection between the set of central roots of $p(N)$ and the image of the norm form $\operatorname{Norm} : A \rightarrow F$ is the set of norms of alternating roots of $f(x)$.
For each such norm $N_0$ in this intersection, either all the elements in $A$ of this norm are alternating roots of $f(x)$, which happens when $LHS|_{N=N_0}=0$, or the unique $\lambda$ satisfying \[ (a_{2k+1}N_0^{k}+ a_{2k-1}N_0^{k-1} + \dots + a_1)\lambda = -(a_{2k}N_0^{k} + \dots + a_2N_0 + a_0)\] is the one and only alternating root of $f(x)$ of norm $N_0$.
\end{prop}
\begin{proof}
Most of the argument follows immediately from the discussion above.
It is left to explain the case of $LHS|_{N=N_0}=0$. In this case, also $RHS|_{N=N_0}=0$, because $N_0$ is a~root of $p(N)=RHS-LHS$.
In this case, for each $\lambda$ with norm $N_0$, the norm of $a_{2k}(\lambda \overline{\lambda})^{k} + \dots + a_2(\lambda \overline{\lambda}) + a_0$ is zero, which implies that \[a_{2k}(\lambda \overline{\lambda})^{k} + \dots + a_2(\lambda \overline{\lambda}) + a_0=0\] because the norm form is anisotropic, and similarly \[(a_{2k+1}N^{k}+ a_{2k-1}N^{k-1} + \dots + a_1)\lambda =0\] for the same reason, and thus the necessary and sufficient condition for being an alternating root is satisfied automatically for any $\lambda$ of norm $N_0$.
\end{proof}

\begin{exmpl}\label{oneex}
Consider $f(x)=x^2 +ix-1-ij \in \mathbb{H}[x]$.
In this case, $RHS=N^2 -2N+2$ and $LHS=N$, and thus \[ p(N)=N^2 -3N+2=(N-2)(N-1). \] Therefore, the norms of alternating roots are either $N_1=1$ or $N_2=2$. In the case of $N_1=1$, we obtain $i\lambda_1=ij$, and thus $\lambda_1=j$, and in the case of $N_2=2$, we obtain $i\lambda_2=-1+ij$, and thus $\lambda_2=i+j$.
\end{exmpl}

\begin{rem}
A polynomial may not have any alternating roots, for example, $f(x)=x^2 +1$ over Hamilton's quaternions $\mathbb{H}$. If it had an alternating root $\lambda$, then $\overline{\lambda}\lambda+1=0$, which means $\operatorname{Norm}(\lambda)+1=0$, but the norm is always non-negative.
\end{rem}
\begin{rem}\label{root exists}
If $A=A_n$ is the Cayley-Dickson algebra constructed over the real numbers $A_0=\mathbb{R}$ by repeatedly choosing $\gamma=-1$ and \[f(x) = a_{2k+1}x^{2k+1} + \dots + a_1x+a_0\in A[x]\] is a~polynomial of odd degree over $A$ with $a_{2k+1} \neq 0$, then $f(x)$ has an alternating root in $A$.
\end{rem}
\begin{proof}
If $a_0=0$, then $0$ is an alternating root, so assume $a_0 \neq 0$.
Notice that the final polynomial $p(N)$, in the case $n=2k+1$, is of degree $2k+1$, the coefficient of $N^{2k+1}$ is $\overline{a_{2k+1}}a_{2k+1}$, which is positive, and the coefficient of $1$ is $-\overline{a_{0}}a_{0}$, which is negative. Now, notice that $p(0)= -\overline{a_{0}}a_{0} < 0$, and $\lim_{N\to\infty} p(N) = \infty$, so by the Intermediate Value Theorem, there is $N_0 \in \mathbb{R}^{\geq 0}$ such that $p(N_0)=0$. Hence, there is an alternating root.
\end{proof}

\subsection{Octave Code}
We include here an octave code for computing the alternating roots of quaternion polynomials over $\mathbb{H}$. In writing the original code, we were assisted by Ido Simon, the first author's research assistant. The final version follows the one suggested by the referee.

\begin{lstlisting}
function [A, b] = AltRoots(pol)
% The function receives the coefficients of a~quaternion polynomial
% ordered from degree zero to the highest degree
% and returns the vector A of alternating roots
% together with a~vector b stressing whether the solution is isolated
% when b = 0 or spherical when b = 1.
  L = length(pol.w);
  if mod(length(pol), 2)
    pol = [pol, 0];
    L = L + 1;
  endif
  eq_even = pol(1:2:L-1);
  eq_odd = pol(2:2:L);
  N_even = Norm(eq_even);
  N_odd = Norm(eq_odd);
  eq = [0, N_odd] - [N_even, 0];
  R = roots(flip(eq));
  A = [];
  b = [];
  for i = 1:length(R)
    if isreal(R(i)) && (R(i) >= 0)
      r1 = -quPolyval(eq_odd, R(i));
      r0 = quPolyval(eq_even, R(i));
      if (isreal(r1)) && (r1.w == 0)
        A = [A, quaternion(sqrt(R(i)),0,0,0)];
        b = [b, 1];
      else
        A = [A, inv(r1) * r0];
        b = [b, 0];
      endif
    endif
  endfor
endfunction

function Pol = Norm(pol)
  L = length(pol.w);
  Pol = zeros(1, 2 * L - 1);
  for i = 1:L
    for j = 1:L
      Pol(i + j - 1) += (pol(i) * conj(pol(j))).w;
    endfor
  endfor
endfunction

function val = quPolyval(pol, q)
  L = length(pol);
  val = 0;
  power = 1;
  for i = 1:L
    val += pol(i) * power;
    power *= q;
  endfor
endfunction
\end{lstlisting}

\begin{exmpl}
Running
\begin{lstlisting}
pkg load quaternion
pol=[quaternion(-1,0,0,-1),quaternion(0,1,0,0),quaternion(1,0,0,0)];
[A, b]=AltRoots(pol)
\end{lstlisting}
returns the two isolated roots described in Example~\ref{oneex}.
\end{exmpl}

\section{Correspondence theorem}
Throughout this section, let $A$ be an arbitrary Cayley-Dickson $F$-algebra, $B$ its Cayley-Dickson doubling, i.e., $B = CD(A,\gamma)$, for a~fixed $\gamma \in F^{\times}$. Moreover, let $f_{alt}(\lambda)$ denote the result of plugging in $\lambda$ alternatively in $f(x)$.

\begin{defn}\label{altf}
Let $f(x) =a_nx^{n}+ \dots +a_1x+a_0 \in A[x]$. We define $\widetilde{f}(x) \in B[x]$, called the alternating friend of $f(x)$, to be the following polynomial:
\[
\widetilde{f}(x) = b_nx^{n}\dots + b_2x^{2}+b_1x+b_0
\]
where $b_{2i}= \frac{1}{\gamma^i} \overline{a_{2i}}$ and $b_{2i+1}= \frac{1}{\gamma^{i+1}} \overline{a_{2i+1}}v$, when $0 \leq 2i,2i+1 \leq n$.
\end{defn}

\begin{theorem}\label{altthm}
For $\lambda \in A$, $\overline{f_{alt}(\lambda)}=\widetilde{f}(\lambda v)$.
\end{theorem}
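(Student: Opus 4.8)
The plan is to prove the identity by computing both sides directly, reducing everything to the scalar $N=\operatorname{Norm}(\lambda)=\overline{\lambda}\lambda=\lambda\overline{\lambda}\in F$, which is central. First I would record the explicit form of the alternating evaluation. Separating $f$ into even and odd parts and using that the alternating product of $2i$ factors collapses to $N^i$ while the product of $2i+1$ factors collapses to $N^i\lambda$, one obtains
\[
f_{alt}(\lambda)=\sum_i a_{2i}N^i+\Bigl(\sum_i a_{2i+1}N^i\Bigr)\lambda .
\]
Applying the involution, which is additive, fixes $F$ (so fixes each $N^i$), and satisfies $\overline{xy}=\overline{y}\,\overline{x}$, then yields
\[
\overline{f_{alt}(\lambda)}=\sum_i \overline{a_{2i}}\,N^i+\overline{\lambda}\,\Bigl(\sum_i \overline{a_{2i+1}}\,N^i\Bigr).
\]

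For the right-hand side the key observation is that $\lambda v$ squares into the center: a direct application of the doubling multiplication gives $(\lambda v)^2=\gamma\,\overline{\lambda}\lambda=\gamma N\in F$. By power-associativity this forces $(\lambda v)^{2i}=\gamma^i N^i$ and $(\lambda v)^{2i+1}=\gamma^i N^i\,\lambda v$, so every power of $\lambda v$ is a scalar multiple of either $1$ or $\lambda v$, and all scalar factors commute and associate freely. I would then substitute these into $\widetilde f(\lambda v)=\sum_k b_k(\lambda v)^k$ using $b_{2i}=\gamma^{-i}\overline{a_{2i}}$ and $b_{2i+1}=\gamma^{-(i+1)}\overline{a_{2i+1}}v$ from Definition \ref{altf}. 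The even part collapses immediately to $\sum_i \gamma^{-i}\overline{a_{2i}}\cdot\gamma^i N^i=\sum_i\overline{a_{2i}}\,N^i$, matching the even part above.

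The one genuinely non-formal step is the odd part, where the single product $(\overline{a_{2i+1}}v)(\lambda v)$ must be evaluated inside $B$. Applying the doubling rule $(0,c)(0,\lambda)=(\gamma\overline{\lambda}c,0)$ gives $(\overline{a_{2i+1}}v)(\lambda v)=\gamma\,\overline{\lambda}\,\overline{a_{2i+1}}$, after which the prefactor $\gamma^{-(i+1)}\cdot\gamma^i N^i=\gamma^{-1}N^i$ cancels exactly the $\gamma$ produced by this product, leaving $\overline{\lambda}\sum_i\overline{a_{2i+1}}\,N^i$. Comparing with the expression for $\overline{f_{alt}(\lambda)}$ then finishes the proof. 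I expect the bookkeeping of the powers of $\gamma$ (checking that the normalizations in Definition \ref{altf} are precisely what makes this cancellation occur) together with the careful handling of the conjugation order in the odd term to be the main obstacle; everything goes through cleanly because $(\lambda v)^2$ lands in $F$, so only two-fold products with scalars factored out ever arise and no genuinely non-associative triple products need to be disentangled.
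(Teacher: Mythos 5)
Your proposal is correct and follows essentially the same route as the paper's own proof: both reduce to a term-by-term computation using the three key facts that $N$ and $\gamma$ are central, that $(\lambda v)^2=\gamma N$ (so powers of $\lambda v$ collapse by power-associativity), and that the doubling rule gives $(\overline{a_{2i+1}}v)(\lambda v)=\gamma\,\overline{\lambda}\,\overline{a_{2i+1}}$, with the normalizing powers of $\gamma$ in Definition~\ref{altf} cancelling exactly as you describe. The only difference is presentational — you sum the even and odd parts explicitly rather than matching individual terms — so there is nothing to add.
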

\begin{proof}
Let $N$ denote the norm of $\lambda$.
It suffices to show that $b_{2k+1} (\lambda v)^{2k+1} = \overline{a_{2k+1} N^{k}\lambda}$, and $b_{2k}(\lambda v)^{2k} = \overline{a_{2k}N^{k}}$ (as summing over all of them gives yields the desired result).
Let us note the following three facts: both $N$ and $\gamma$ are central,
$(\lambda \nu)^2 = \gamma \overline{\lambda} \lambda = \gamma N$ and
$(\overline{a_{2k+1}} \nu) (\lambda \nu) = \gamma \overline{\lambda}
\overline{a_{2k+1}}$. Then
\begin{itemize}
\item $
\begin{aligned}
&& b_{2k+1} (\lambda \nu)^{2k+1} = b_{2k+1} ((\lambda \nu) ((\lambda
\nu)^2)^k) = b_{2k+1} ((\lambda \nu) (\gamma N)^k)= \\ 
&& \frac{1}{\gamma^{k+1}} \gamma^k N^k (\overline{a_{2k+1}} \nu)(\lambda
\nu) = N^k \overline{\lambda} \overline{a_{2k+1}} =
\overline{a_{2k+1}N^k \lambda};   
\end{aligned}
$ 
\item $
    b_{2k} (\lambda \nu)^{2k} = b_{2k} ((\lambda \nu)^2)^k = b_{2k}
(\gamma N)^k = N^k \overline{a_{2k}} = \overline{a_{2k} N^k}.
$
\end{itemize}
\end{proof}

\begin{cor}
An element $\lambda$ in $A$ is an alternating root of $f(x) \in A[x]$ if and only if $\lambda v$ is a~regular root of $f(x)$'s alternating friend, i.e., $ \widetilde{f(x)} \in B[x]$.
\end{cor}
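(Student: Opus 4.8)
The plan is to read the corollary off directly from Theorem~\ref{altthm}, since that theorem already carries the entire computational burden and the corollary is essentially a restatement of it. First I would unwind the two definitions in play. By the definition of alternating root, $\lambda \in A$ being an alternating root of $f(x)$ means precisely that $f_{alt}(\lambda) = 0$. By the definition of regular root, $\lambda v \in B$ being a regular root of $\widetilde{f}(x)$ means precisely that the ordinary evaluation $\widetilde{f}(\lambda v) = \sum_k b_k (\lambda v)^k$ vanishes, which is exactly the summed expression computed term by term in the proof of Theorem~\ref{altthm}. Because $B$ is power-associative, each power $(\lambda v)^k$ is unambiguous, so this evaluation is well-defined.

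Next I would invoke the fact that the Cayley-Dickson involution $x \mapsto \overline{x}$ is an additive bijection fixing $0$; in particular $\overline{y} = 0$ if and only if $y = 0$. Applying this to $y = f_{alt}(\lambda)$ gives $f_{alt}(\lambda) = 0 \iff \overline{f_{alt}(\lambda)} = 0$. Theorem~\ref{altthm} identifies $\overline{f_{alt}(\lambda)}$ with $\widetilde{f}(\lambda v)$, whence $\overline{f_{alt}(\lambda)} = 0 \iff \widetilde{f}(\lambda v) = 0$. Concatenating the two equivalences yields that $\lambda$ is an alternating root of $f(x)$ if and only if $\lambda v$ is a regular root of $\widetilde{f}(x)$, which is exactly the claim.

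I do not expect a genuine obstacle here, since the real content sits in Theorem~\ref{altthm}. The only points requiring care are bookkeeping ones: confirming that ``regular root'' is interpreted as the same summed evaluation $\sum_k b_k (\lambda v)^k$ that the theorem computes piecewise, and noting that $f_{alt}(\lambda)$ lands in $A$ while $\widetilde{f}(\lambda v)$ a~priori lands in the larger algebra $B$. The theorem's computation shows the two agree inside $A \subseteq B$, so passing through the involution is harmless. With these conventions fixed, the corollary follows immediately.
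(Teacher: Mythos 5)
Your proposal is correct and follows essentially the same route as the paper: both arguments chain the equivalences $f_{alt}(\lambda)=0 \iff \overline{f_{alt}(\lambda)}=0 \iff \widetilde{f}(\lambda v)=0$, with the first step justified by the involution vanishing only at $0$ and the second step being exactly Theorem~\ref{altthm}. The extra remarks on power-associativity and on $\widetilde{f}(\lambda v)$ landing in $A$ are harmless bookkeeping that the paper leaves implicit.
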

\begin{proof}
$\overline{f_{alt}(\lambda)} = 0 \iff f_{alt}(\lambda)= 0 \iff \widetilde{f}(\lambda v) = 0$. The last equality is given by the theorem.
\end{proof}

\begin{cor}
Let $A$ be a~division Cayley-Dickson $\mathbb{R}$-Algebra. For all polynomials of odd degree $f(x) = a_{2k+1}x^{2k+1} + \dots + a_1x+a_0$ in $A$, its alternating friend $\widetilde{f}(x)$ has a~regular root in $Av$.
\end{cor}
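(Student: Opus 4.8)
The plan is to obtain the result immediately by chaining the two facts proved just above: Remark~\ref{root exists}, which guarantees an alternating root of an odd-degree polynomial, and the preceding corollary, which translates alternating roots of $f$ into regular roots of $\widetilde{f}$ lying in $Av$.

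First I would check that $f$ actually has an alternating root in $A$. Because $f$ has odd degree, its leading coefficient $a_{2k+1}$ is nonzero, which is exactly the hypothesis of Remark~\ref{root exists}. The only thing to verify is that the Remark is applicable to the $A$ at hand: although it is phrased for the classical tower built with $\gamma_i = -1$, its proof uses nothing about $A$ beyond the fact that the norm form is positive definite. This is automatic for a division Cayley-Dickson $\mathbb{R}$-algebra, since such an algebra has anisotropic norm and $\operatorname{Norm}(1) = 1 > 0$, and a real anisotropic quadratic form attaining a positive value is positive definite. (One may also invoke the classification: every division Cayley-Dickson $\mathbb{R}$-algebra is isomorphic to one of $\mathbb{R}, \mathbb{C}, \mathbb{H}, \mathbb{O}$, so Remark~\ref{root exists} applies verbatim.) Granting this, the associated real polynomial $p(N)$ has degree $2k+1$, positive leading coefficient $\operatorname{Norm}(a_{2k+1})$ and nonpositive constant term $-\operatorname{Norm}(a_0)$, so the Intermediate Value Theorem provides a root $N_0 \geq 0$ and hence an alternating root $\lambda \in A$ of $f$.

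Having such a $\lambda$, I would simply apply the corollary that precedes the statement: $\lambda$ is an alternating root of $f(x)$ if and only if $\lambda v$ is a regular root of the alternating friend $\widetilde{f}(x) \in B[x]$. Since $\lambda v \in Av$ by construction, this exhibits exactly the desired regular root of $\widetilde{f}$ in $Av$.

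The argument is short, so I expect no genuine mathematical obstacle; the only care needed is administrative. One must make sure Remark~\ref{root exists} is legitimately available for the given $A$ (the positive-definiteness point above), and one must dispose of the degenerate case $a_0 = 0$: there $0$ is already an alternating root of $f$, so $\lambda v = 0 \in Av$ is a regular root of $\widetilde{f}$, and the conclusion holds trivially.
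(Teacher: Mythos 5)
Your proof is correct and takes essentially the same route as the paper: invoke Remark~\ref{root exists} to produce an alternating root $\lambda \in A$, then use the correspondence result (Theorem~\ref{altthm}, via its corollary) to conclude that $\lambda v \in Av$ is a regular root of $\widetilde{f}(x)$. Your extra verification that the Remark, stated for the classical $\gamma_i = -1$ tower, legitimately applies to any division Cayley-Dickson $\mathbb{R}$-algebra (via positive-definiteness of the anisotropic norm, or the classification $\mathbb{R},\mathbb{C},\mathbb{H},\mathbb{O}$) is a careful point the paper leaves implicit.
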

\begin{proof}
By Remark~\ref{root exists}, $f$ has an alternating root $\lambda$ in $A$, and by the theorem, $\lambda v$ is a~regular root of $f$'s alternating friend, $\widetilde{f}(x)$.
\end{proof}

\section{Generalized result}

\begin{rem}\label{miracle}
Let $f(x) \in A[x]$, and let $\widetilde{f}(x)$ be its alternating friend. Then $\widetilde{f}(Av) \subseteq A$, i.e., the image of $Av$ under $\widetilde{f}$ is in $A$.
\end{rem}
\begin{proof}
By Theorem~\ref{altthm}, for any $\lambda \in A$, $\widetilde{f}(\lambda v)=\overline{f_{alt}(\lambda)} \in A$.
\end{proof}

Let $A$ be a~Cayley-Dickson algebra over $F$, and $B=A \oplus Av$ its doubling with $v^2 =\gamma$. Let $h(x) \in B[x]$ be a~polynomial, and write $h(x)=\widetilde{f}(x)+\widetilde{g}(x)v$ for appropriate polynomials $f(x),g(x)\in A[x]$. (Their existence is an easy straight-forward computation.)

\begin{thm}
For every $\lambda \in A$, $\lambda v$ is a~root of $h(x)$ if and only if $\lambda$ is an alternating root of $f(x)$ and $\overline{\lambda}$ is an alternating root of $g(x)$.
\end{thm}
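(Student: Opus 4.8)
The plan is to evaluate $h(\lambda v)$ explicitly, show that it decomposes with respect to the direct sum $B = A \oplus Av$ as
\[
h(\lambda v) = \overline{f_{alt}(\lambda)} + \overline{g_{alt}(\overline{\lambda})}\,v,
\]
and then read off the two conditions by comparing each summand to $0$. First I would use the given decomposition $h(x) = \widetilde{f}(x) + \widetilde{g}(x)v$ and evaluate at $x = \lambda v$. Since evaluation is additive over monomials, $h(\lambda v) = \widetilde{f}(\lambda v) + \big(\widetilde{g}(x)v\big)\big|_{x=\lambda v}$, where the second summand means $\sum_j (c_j v)(\lambda v)^j$ with $c_j$ the coefficients of $\widetilde{g}$. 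The first summand is handled for free: by Theorem~\ref{altthm}, $\widetilde{f}(\lambda v) = \overline{f_{alt}(\lambda)} \in A$.

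The crux is the second summand, and here I would isolate the elementary identity
\[
\big(\widetilde{g}(x)v\big)\big|_{x=\lambda v} = \widetilde{g}(\overline{\lambda} v)\,v,
\]
expressing that multiplying the polynomial by $v$ on the right converts evaluation at $\lambda v$ into evaluation at $\overline{\lambda} v$ followed by a right multiplication by $v$. It suffices to verify this monomial by monomial, $(c v)(\lambda v)^j = \big(c(\overline{\lambda}v)^j\big)v$ for $c \in B$, and by $F$-linearity in $c$ it is enough to treat $c \in A$ and $c \in Av$ separately. For these computations I would reuse exactly the ingredients from the proof of Theorem~\ref{altthm}: that $(\lambda v)^2 = \gamma N$ is central (with $N = \operatorname{Norm}(\lambda)$ and $\operatorname{Norm}(\overline{\lambda}) = N$), together with the defining multiplication rule, which produces $(\beta v)(\lambda v) = \gamma\,\overline{\lambda}\,\beta$ and $\beta(\lambda v) = (\lambda\beta)v$ for $\beta \in A$. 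The single leftover factor of $v$ is precisely what turns $\lambda$ into $\overline{\lambda}$, which is the source of the conjugate in the statement. Once this identity is established, applying Theorem~\ref{altthm} to $g$ at $\overline{\lambda}$ gives $\widetilde{g}(\overline{\lambda} v) = \overline{g_{alt}(\overline{\lambda})}$, so the second summand equals $\overline{g_{alt}(\overline{\lambda})}\,v \in Av$.

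Finally, since $\overline{f_{alt}(\lambda)} \in A$ and $\overline{g_{alt}(\overline{\lambda})}v \in Av$, and $B = A \oplus Av$ is a direct sum, $h(\lambda v) = 0$ is equivalent to the simultaneous vanishing $\overline{f_{alt}(\lambda)} = 0$ and $\overline{g_{alt}(\overline{\lambda})}v = 0$. Using that conjugation is an involution and that $v$ is invertible (as $v^2 = \gamma \in F^\times$), these reduce to $f_{alt}(\lambda) = 0$ and $g_{alt}(\overline{\lambda}) = 0$, which is the assertion that $\lambda$ is an alternating root of $f$ and $\overline{\lambda}$ is an alternating root of $g$.

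The main obstacle is the middle step: because multiplication in $B$ is non-associative, I cannot simply reassociate to pull the factor of $v$ through, and the naive guess $\big(\widetilde{g}(x)v\big)|_{x=\lambda v} = \widetilde{g}(\lambda v)\,v$ is false. The identity must be justified directly from the explicit Cayley–Dickson formula and power-associativity, and the delicate point is verifying that it is $\overline{\lambda}$, not $\lambda$, that survives.
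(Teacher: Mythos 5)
Your proposal is correct and takes essentially the same route as the paper's own proof: the same decomposition $h(\lambda v)=\widetilde{f}(\lambda v)+\bigl(\widetilde{g}(x)v\bigr)\big|_{x=\lambda v}$, the same key identity $\bigl(\widetilde{g}(x)v\bigr)\big|_{x=\lambda v}=\widetilde{g}(\overline{\lambda}v)\,v$ verified monomial by monomial from the Cayley--Dickson multiplication rules (your case split $c\in A$ versus $c\in Av$ is exactly the paper's split by parity of the degree), and the same final appeal to Theorem~\ref{altthm} to convert the vanishing of each component into the two alternating-root conditions.
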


\begin{proof}
It is enough to prove that plugging in $\lambda v$ in the polynomial $G(x)=\widetilde{g}(x)v$ gives the same result as the product of $\widetilde{g}(\overline{\lambda} v)$ and $v$, because $h(\lambda v)=\widetilde{f}(\lambda v)+G(\lambda v)$, and $\widetilde{f}(\lambda v)$ is in $A$ while $G(\lambda v)$ is in $Av$, and so $h(\lambda v) = 0$ if and only if both $\widetilde{f}(\lambda v)=0$ and $G(\lambda v)=0$.
Write $N$ for the norm of $\lambda$. Now, for each term $\widetilde{g}_n(x)=b_n x^n$ in $\widetilde{g}(x)$, the corresponding term in $G(x)$ is $G_n(x)=b_n v x^n$, and so, when $n=2k$, we have $b_n \in A$, 
\[
G_n(\lambda v)=(b_n v)(\lambda v)^{2k}=N^k \gamma^k b_n v ~\text{ and }~ \widetilde{g}_n(\overline{\lambda} v) \cdot v=(b_n (\overline{\lambda} v)^{2k})v=N^k \gamma^k b_n v,
\] 
and when $n=2k+1$, we have $b_n=a_n v\in Av$, 
\[ G_n(\lambda v)=((a_n v) v)(\lambda v)^{2k+1}=N^k \gamma^{k+1} (a_n)(\lambda v)=N^k \gamma^{k+1} (\lambda a_n)v 
\]
and 
\[ \widetilde{g}_n(\overline{\lambda} v) \cdot v =((a_n v) (\overline{\lambda} v)^{2k+1})\cdot v=(N^k \gamma^k (a_n v) (\overline{\lambda} v))v=(N^k \gamma^{k+1} \lambda a_n)v.
\]
It follows that $\lambda v$ is a~root of $h(x)$ if and only if $\lambda v$ is a~root of $\widetilde{f}(x)$ and $\overline{\lambda} v$ is a~root of $\widetilde{g}(x)$. In turn, $\lambda v$ is a~root of $\widetilde{f}(x)$ if and only if $\lambda$ is an alternating root of $f(x)$, and $\overline{\lambda}v$ is a~root of $\widetilde{g}(x)$ if and only if $\overline{\lambda}$ is an alternating root of $g(x)$.
\end{proof}

\section*{Acknowledgements}
The authors are very much indebted to the referee whose detailed referee report improved the quality of the paper considerably. The authors also thank Uzi Vishne for the helpful comments on the manuscript.
The original Octave code in Section 3 was written by Ido Simon, the first author's research assistant, whose hiring is supported by the internal research grant of the Acadmic College of Tel-Aviv-Yaffo.


{\small

}


\begin{thebibliography}{1}

\bibitem{Chapman:2020}
A.~Chapman.
\newblock Polynomial equations over octonion algebras.
\newblock {\em \em J. Algebra Appl.}, 19(6):2050102, 10, 2020.

\bibitem{ChapmanGutermanVishkautsanZhilina:2023}
A.~Chapman, A.~Guterman, S.~Vishkautsan, and S.~Zhilina.
\newblock Roots and critical points of polynomials over {C}ayley-{D}ickson
  algebras.
\newblock {\em \em Comm. Algebra}, 51(4):1355--1369, 2023.

\bibitem{ChapmanMachen:2017}
A.~Chapman and C.~Machen.
\newblock Standard polynomial equations over division algebras.
\newblock {\em \em Adv. Appl. Clifford Algebr.}, 27(2):1065--1072, 2017.

\bibitem{JanovskaOpfer:2010}
D.~Janovsk\'{a} and G.~Opfer.
\newblock A note on the computation of all zeros of simple quaternionic
  polynomials.
\newblock {\em \em SIAM J. Numer. Anal.}, 48(1):244--256, 2010.

\bibitem{BOI}
M.-A. Knus, A.~Merkurjev, M.~Rost, and J.-P. Tignol.
\newblock {\em The Book of Involutions}.
\newblock volume~44 of Colloquium Publications. American Mathematical Society,
  1998.

\bibitem{Schafer:1954}
R.~D. Schafer.
\newblock On the algebras formed by the {C}ayley-{D}ickson process.
\newblock {\em \em Amer. J. Math.}, 76:435--446, 1954.

\end{thebibliography}
\end{document}